\newtheorem{theorem}{Theorem}[section]
\newtheorem{proposition}[theorem]{Proposition}
\newtheorem{lemma}[theorem]{Lemma}
\newtheorem{corollary}[theorem]{Corollary}
\newtheorem{claim}[theorem]{Claim}
\newtheorem{question}[theorem]{Question}
\newtheorem*{main}{Main Result}
\theoremstyle{definition}
\newtheorem{definition}[theorem]{Definition}
\theoremstyle{remark}
\newtheorem{remark}[theorem]{Remark}
\numberwithin{equation}{section}
\begin{document}

\bibliographystyle{plain}
\title{Complex symmetric composition operators with automorphic symbols}

\author[Y.X. Gao and Z.H. Zhou] {Yong-Xin Gao and Ze-Hua Zhou$^*$}
\address{\newline  Yongxin Gao\newline Department of Mathematics,
Tianjin University, Tianjin 300350, P.R. China.}

\email{tqlgao@163.com}

\address{\newline Zehua Zhou\newline Department of Mathematics, Tianjin University, Tianjin 300350,
P.R. China.}
\email{zehuazhoumath@aliyun.com; zhzhou@tju.edu.cn}

\keywords{Composition operator; Complex symmetry; Elliptic automorphism}
\subjclass[2010]{primary 47B33, 46E20; Secondary 47B38, 46L40, 32H02}

\date{}
\thanks{\noindent $^*$Corresponding author.\\
This work was supported in part by the National Natural Science
Foundation of China (Grant Nos. 11371276; 11301373; 11401426).}

\begin{abstract}
In this paper we show that a composition operator $C_\varphi$ cannot be complex symmetric on Hardy-Hilbert space $H^2(D)$ when $\varphi$ is an elliptic automorphism of order $3$ and not a rotation. This completes the project of finding all invertible composition operators which are complex symmetric $H^2(D)$.
\end{abstract}

\maketitle

\section{Introduction}

Let $T$ be a bounded operator on a complex Hilbert space $\mathcal{H}$. Then $T$ is called complex symmetric if there exists a conjugation $C$ such that $T=CT^*C$. Here a conjugation is a conjugate-linear, isometric involution on $\mathcal{H}$. The operator $T$ may also be called $C$-symmetric if $T$ is complex symmetric with respect to a specifical conjugation $C$. For more details about complex symmetric operators one may turn to \cite{CS1} and \cite{Cs2}.

In this paper, we are particularly interested in the complex symmetry of composition operators induced by analytic self-maps of $D$. This subject was started by Garcia and Hammond in \cite{GH}.

Recall that for each analytic self-map $\varphi$ of the unit disk $D$, the composition operator given by
$$C_\varphi f=f\comp\varphi$$
is always bounded on $H^2(D)$. Here, the Hardy-Hilbert space $H^2(D)$ is the set of analytic functions on $D$ such that
$$||f||_{H^2}^2=\sup_{0<r<1}\int_{0}^{2\pi}|f(re^{i\theta})|^2\frac{d\theta}{2\pi}<\infty.$$

Several simple examples of complex symmetric composition operators on $H^2(D)$ arise immediately. For example, every normal operator is complex symmetric (see \cite{CS1}), so when $\varphi(z)=sz$ with $|s|\leqslant 1$, $C_\varphi$ is normal hence complex symmetric on $H^2(D)$. Also, Theorem 2 in \cite{Cs3} states that
each operator satisfying a polynomial equation of order $2$ is complex symmetric. So when $\varphi$ is an elliptic automorphism of order $2$, then $C_\varphi^2=I$, thus $C_\varphi$ is complex symmetric on $H^2(D)$. In \cite{Invo} Waleed Noor find the conjugation $C$ such that $C_\varphi$ is $C$-symmetric when $\varphi$ is an elliptic automorphism of order $2$.

However, we are still far away from our final destination: finding out all composition operators which are complex symmetric on $H^2(D)$. The first step is, of course, to determine the complex symmetric composition operators induced by automorphisms. Bourdon and Waleed Noor considered this problem in \cite{BN}. The next Proposition is Proposition 2.1 in \cite{BN}

\begin{proposition}\label{propa}
Let $\varphi$ be a self-map of $D$. If $C_\varphi$ is complex symmetric on $H^2(D)$, then $\varphi$ has a fixed point in $D$.

Particularly, if $\varphi$ is either a parabolic or a hyperbolic automorphism, then $C_\varphi$ cannot be complex symmetric on $H^2(D)$.
\end{proposition}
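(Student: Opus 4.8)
The plan is to argue by contradiction, drawing all the needed information from the one eigenvector of $C_\varphi$ that is always available: the constant function $\mathbf 1$, for which $C_\varphi\mathbf 1=\mathbf 1$. Suppose $C_\varphi$ is $C$-symmetric for some conjugation $C$ and, contrary to the claim, that $\varphi$ has no fixed point in $D$. Then $\varphi$ is not an elliptic automorphism, so the Denjoy--Wolff theorem yields a point $\omega\in\partial D$ with $\varphi_n:=\varphi\comp\cdots\comp\varphi\to\omega$ uniformly on compact subsets of $D$. (For the last assertion of the proposition one just notes that parabolic and hyperbolic automorphisms have all their fixed points on $\partial D$.)

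The first step is to move to the adjoint. From $C_\varphi=CC_\varphi^*C$ and $C^2=I$ we get $C_\varphi^*=CC_\varphi C$, so $g:=C\mathbf 1$ satisfies $C_\varphi^*g=g$, and $\|g\|=\|\mathbf 1\|=1$ because $C$ is isometric. Hence it is enough to show that $C_\varphi^*$ has no nonzero fixed vector once the Denjoy--Wolff point lies on the circle.

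The second step is to compute $g$ by iteration. With $K_w(z)=(1-\bar w z)^{-1}$ one has $C_\varphi^n K_w=K_w\comp\varphi_n$, and since iterating $C_\varphi^*g=g$ gives $(C_\varphi^*)^n g=g$, it follows that
\[
g(w)=\langle (C_\varphi^*)^n g,\,K_w\rangle=\langle g,\,K_w\comp\varphi_n\rangle\qquad(n\geq 1).
\]
Now $|K_w(z)|\leq(1-|w|)^{-1}$ for every $z\in D$, so $\{K_w\comp\varphi_n\}_n$ is bounded in $H^2(D)$; passing to a weakly convergent subsequence and pairing with the point evaluations $\langle\,\cdot\,,K_z\rangle$ identifies its weak limit as the constant function $(1-\bar w\omega)^{-1}$, because $\varphi_n(z)\to\omega$ for each $z$. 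Substituting back,
\[
g(w)=\bigl\langle g,\,(1-\bar w\omega)^{-1}\mathbf 1\bigr\rangle=\frac{g(0)}{1-\bar\omega w}\qquad(w\in D).
\]

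The third step is the conclusion. Since $|\omega|=1$, the function $w\mapsto(1-\bar\omega w)^{-1}=\sum_{n\geq 0}\bar\omega^{\,n}w^n$ has Taylor coefficients that are not square-summable, so it is not in $H^2(D)$; as $g\in H^2(D)$, this forces $g(0)=0$ and therefore $g\equiv 0$, contradicting $\|g\|=1$. I expect the only genuinely delicate point to be the identification of the weak limit of $K_w\comp\varphi_n$: pointwise convergence to the constant $(1-\bar w\omega)^{-1}$ is immediate from Denjoy--Wolff, but upgrading it to weak convergence in $H^2(D)$---so that it may be interchanged with $\langle g,\,\cdot\,\rangle$---is exactly where the uniform bound $\|K_w\comp\varphi_n\|_\infty\leq(1-|w|)^{-1}$ and weak sequential compactness of balls in $H^2(D)$ are needed; everything else is bookkeeping. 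As a sanity check, the argument correctly produces nothing in the known complex-symmetric cases: for a rotation the iterates do not converge, and when the iterates converge to an interior fixed point $p$ the same computation only gives $g=g(0)K_p\in H^2(D)$.
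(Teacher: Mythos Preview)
Your argument is correct. Note, however, that the paper does not supply its own proof of this proposition: it is quoted verbatim as Proposition~2.1 of Bourdon and Waleed Noor \cite{BN}, so there is no in-paper proof to compare against.

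As a self-contained proof your route is the natural one: push the eigenvector $\mathbf 1$ of $C_\varphi$ across the conjugation to obtain a unit fixed vector $g$ of $C_\varphi^*$, then use Denjoy--Wolff iteration together with the identity $C_\varphi^nK_w=K_w\comp\varphi_n$ to force $g(w)=g(0)(1-\bar\omega w)^{-1}$, which is incompatible with $g\in H^2(D)$ when $|\omega|=1$. One small streamlining: you need not invoke subsequences. Since $\{K_w\comp\varphi_n\}_n$ is norm-bounded and
\[
\langle K_w\comp\varphi_n,\,K_z\rangle=K_w(\varphi_n(z))\longrightarrow (1-\bar w\omega)^{-1}=\langle (1-\bar w\omega)^{-1}\mathbf 1,\,K_z\rangle
\]
for every $z\in D$, and the reproducing kernels have dense linear span in $H^2(D)$, the full sequence already converges weakly to the constant $(1-\bar w\omega)^{-1}$. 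Everything else is exactly as you wrote.
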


Thanks to this proposition, we only need to investigate the elliptic automorphisms of the unit disk $D$. It turns out that things depend much on the orders of the automorphisms. The next proposition is one of the main result in \cite{BN}.

\begin{proposition}\label{propb}
Let $\varphi$ be an elliptic automorphism of order $q$. If $q=2$, then $C_\varphi$ is always complex symmetric on $H^2(D)$. If $4\leqslant q\leqslant\infty$, then $C_\varphi$ is complex symmetric on $H^2(D)$ only if $\varphi$ is a rotation.
\end{proposition}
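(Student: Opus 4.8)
The plan is to handle the two ranges of $q$ separately, the first being immediate and the second carrying all the work.

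\emph{Order $2$.} If $\varphi$ is elliptic of order $2$ then $\varphi\circ\varphi=\mathrm{id}$, so $C_\varphi^2=I$ and the minimal polynomial of $C_\varphi$ has degree at most $2$. By the result of \cite{Cs3} recalled above, every operator annihilated by a degree‑$2$ polynomial is complex symmetric, so $C_\varphi$ is complex symmetric on $H^2(D)$. (Concretely, one may split $H^2(D)=\ker(C_\varphi-I)\oplus\ker(C_\varphi+I)$ and assemble a conjugation $C$ from conjugations on the two summands so that $CC_\varphi^*C=C_\varphi$.)

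\emph{Order $q$ with $4\le q\le\infty$: set‑up.} Suppose toward a contradiction that $C_\varphi$ is $C$‑symmetric for some conjugation $C$, but $\varphi$ is not a rotation. Since $\varphi$ is elliptic it has a unique fixed point $p\in D$ (a boundary fixed point would make $\varphi$ parabolic or hyperbolic), and ``not a rotation'' means $p\ne0$. The involutive automorphism $\psi_p(z)=\frac{p-z}{1-\bar pz}$ conjugates $\varphi$ to a rotation: $\sigma:=\psi_p\circ\varphi\circ\psi_p$ satisfies $\sigma(z)=\lambda z$, with $\lambda$ a primitive $q$‑th root of unity when $q<\infty$ and $\lambda=e^{2\pi i\theta}$, $\theta$ irrational, when $q=\infty$; and, as $C_{\psi_p}$ is a bounded involution, $C_\varphi=C_{\psi_p}C_\sigma C_{\psi_p}$. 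Hence $C_\varphi$ is diagonalized by the twisted monomials $C_{\psi_p}z^n$, with $C_\varphi(C_{\psi_p}z^n)=\lambda^nC_{\psi_p}z^n$, so its point spectrum is $\{\lambda^n:n\ge0\}$ and $\ker(C_\varphi-\mu)=C_{\psi_p}\big(\overline{\mathrm{span}}\{z^n:\lambda^n=\mu\}\big)$. On the adjoint side, using the reproducing kernel $K_w(z)=(1-\bar wz)^{-1}$ and the identity $C_\varphi^*K_w=K_{\varphi(w)}$, for $w\ne p$ the points $\varphi^j(w)$ are distinct and $u_\mu^{(w)}:=\sum_{j=0}^{q-1}\mu^{-j}K_{\varphi^j(w)}$ (when $q<\infty$) is an eigenvector of $C_\varphi^*$ for the eigenvalue $\mu$; equivalently $\ker(C_\varphi^*-\bar\mu)=C_{\psi_p}^*\big(\overline{\mathrm{span}}\{z^n:\lambda^n=\mu\}\big)$.

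\emph{The rigidity argument.} Complex symmetry links the two pictures: from $C_\varphi=CC_\varphi^*C$ one gets, for every $\mu$, that $C$ is a conjugate‑linear isometry of $\ker(C_\varphi-\mu)$ onto $\ker(C_\varphi^*-\bar\mu)$; equivalently, with the bounded symmetric bilinear form $[x,y]:=\langle x,Cy\rangle$ the operator $C_\varphi$ is self‑adjoint, so distinct eigenspaces of $C_\varphi$ are $[\cdot,\cdot]$‑orthogonal, $[\cdot,\cdot]$ is non‑degenerate on each of them, and $\|C\,\cdot\|=\|\cdot\|$ everywhere. I would then substitute the explicit eigenvectors above — the twisted monomials $C_{\psi_p}z^n$, the kernel combinations $u_\mu^{(w)}$, and their $C$‑images — into these relations. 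The isometry identities $\|C\,C_{\psi_p}z^n\|=\|C_{\psi_p}z^n\|$, together with $[\cdot,\cdot]$‑orthogonality across the (at least four, when $q<\infty$) distinct eigenvalues and non‑degeneracy on each eigenspace, form a system in the single Möbius parameter $p$; the aim is to show this system forces $p=0$, contradicting $p\ne0$. For $q=\infty$ one runs the same scheme with the simple eigenvectors $C_{\psi_p}z^n$ and the matching eigenvectors of $C_\varphi^*$, exploiting the density of $\{\lambda^n:n\ge0\}$ in the circle to carry the constraints to every monomial. Either way, $\varphi$ must be a rotation.

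\emph{Main obstacle.} Everything before the rigidity step is routine; the real difficulty is the implication ``a conjugation intertwines the eigendata of $C_\varphi$ and of $C_\varphi^*$'' $\Longrightarrow$ ``$p=0$'', a concrete but delicate computation. It is precisely here that $q\ge4$ is used: at least four eigenvalues (or, for $q=\infty$, the full dense set) are needed to over‑determine the one complex parameter $p$. For $q=3$ the two non‑trivial eigenvalues are the complex‑conjugate pair $\omega,\bar\omega$, the analogous system underdetermines $p$, and the argument breaks down — which is exactly why the order‑$3$ case requires the separate analysis carried out in the present paper.
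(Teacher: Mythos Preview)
The paper does not prove this proposition itself; it is quoted from \cite{BN} as background. So there is no in-paper argument to compare against, and the question is simply whether your proposal stands on its own.

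Your treatment of $q=2$ is complete and matches what the paper says in the introduction: $C_\varphi^2=I$, hence $C_\varphi$ satisfies a degree-$2$ polynomial, hence is complex symmetric by \cite{Cs3}.

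For $4\le q\le\infty$ your set-up is correct and is essentially the framework used in \cite{BN}: conjugate $\varphi$ to the rotation $\sigma(z)=\lambda z$ via $\psi_p$, identify $\ker(C_\varphi-\mu)=C_{\psi_p}\bigl(\overline{\mathrm{span}}\{z^n:\lambda^n=\mu\}\bigr)$ and $\ker(C_\varphi^*-\bar\mu)=C_{\psi_p}^*\bigl(\overline{\mathrm{span}}\{z^n:\lambda^n=\mu\}\bigr)$, and use that any conjugation $C$ implementing complex symmetry must carry the former isometrically onto the latter. But the decisive step --- the ``rigidity argument'' that these constraints force $p=0$ --- is not carried out. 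You write ``I would then substitute \ldots'' and ``the aim is to show this system forces $p=0$'', and in your final paragraph you explicitly label this implication the ``main obstacle'' and a ``concrete but delicate computation''. That computation \emph{is} the proposition for $q\ge4$; without it you have a correct outline, not a proof. In particular, nothing you have written exhibits a single equation in $p$ that fails unless $p=0$, nor explains concretely how the count ``at least four eigenvalues'' over-determines the one parameter $p$ --- you assert that it does, but do not show it. (For $q=\infty$ the eigenspaces are one-dimensional and the argument is in fact more direct, but you do not execute that case either.)
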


However, the order 3 elliptic case remains as an open question, which is posed by Bourdon and Waleed Noor in \cite{BN}:

\begin{question}
Is $C_\varphi$ complex symmetric on $H^2(D)$ when $\varphi$ is an elliptic automorphism of order $3$?
\end{question}

The aim of this paper is to solve this problem. By dong this we complete the project of finding out all invertible composition operators which are complex symmetric $H^2(D)$.

In our main result Theorem \ref{main}, we prove that if $\varphi$ is an elliptic automorphism of order $3$ and not a rotation, then the composition operator $C_\varphi$ cannot be complex symmetric on $H^2(D)$. Then we can come to our final result as follows. It will be given as Corollary \ref{last} in the third section.

\begin{main}\label{fin}
Suppose $\varphi$ is an automorphism of $D$. Then $C_\varphi$ is complex symmetric on $H^2(D)$ if and only if $\varphi$ is either a rotation or an elliptic automorphism of order two.
\end{main}

\section{Preliminary}

The Hardy space $H^2(D)$ is naturally a Hilbert space, with the inner product
$$\langle f,g\rangle=\sup_{0<r<1}\int_{0}^{2\pi}f(re^{i\theta})\overline{g(re^{i\theta})}\frac{d\theta}{2\pi}.$$
For each $w\in D$, let
$$K_w(z)=\frac{1}{1-\overline{w}z}.$$
Then $K_w\in H^2(D)$ is the reproducing kernel at the point $w$, i.e., $$\langle f,K_w\rangle=f(w)$$ for all $f\in H^2(D)$.

It is well known that the automorphisms of the unit disk $D$ fall into three categories: parabolic and hyperbolic automorphisms have not fixed point in $D$, and besides them are the elliptic automorphisms who have a unique fixed point in $D$.

\begin{definition}
The order of an elliptic automorphism $\varphi$ is the smallest positive integer such that $\varphi_n(z)=z$ for all $z\in D$. Here $\varphi_n=\varphi\comp\varphi\comp...\comp\varphi$ denotes the $n$-th iterate of $\varphi$. If no such positive integer exists, then $\varphi$ is said to have order $\infty$.
\end{definition}

Note that if the order of an automorphism $\varphi$ is one, then $\varphi$ is identity on $D$. If $\varphi$ has order two, then $\varphi$ is of the form
$$\varphi(z)=\varphi_a(z)=\frac{a-z}{1-\overline{a}z}$$
for some $a\in D$.
\begin{remark}
The notation $\varphi_a$ will be used throughout this paper. $\varphi_a$ is the involution automorphism exchanges $0$ and $a$.
\end{remark}

By Propositions \ref{propa} and \ref{propb}, we only need to be concerned with the elliptic automorphisms that have order $3$. Moreover, if the fixed point of a automorphism is $0$, then it is a rotation and of course complex symmetric. So throughout this paper we will always assume that $\varphi$ is an elliptic automorphism of order $3$ with fixed point $a\in D\backslash\{0\}$.
\\

For a complex symmetric operator $T$, one should keep the following simple result in mind.

\begin{lemma}
Suppose $T$ is $C$-symmetric on $\mathcal{H}$. Then $\lambda\in\mathbb{C}$ is a eigenvalue of $T$ if and only if $\overline{\lambda}$ is a eigenvalue of $T^*$. Moreover, the conjugation $C$ maps the eigenvectors subspace $Ker(T-\lambda)$ onto $Ker(T^*-\overline{\lambda})$.
\end{lemma}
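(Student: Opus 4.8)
The plan is to exploit the defining relation $T=CT^*C$ together with the two structural features of a conjugation: $C$ is conjugate-linear, and it is an isometric involution, so in particular $C^2=I$ and $C$ is a bijection of $\mathcal{H}$ onto itself.

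First I would rewrite $T=CT^*C$ in the more convenient form $CT=T^*C$, which is legitimate since composing on the left with $C$ and using $C^2=I$ gives $CT=C(CT^*C)=T^*C$. Now take any $x\in\mathrm{Ker}(T-\lambda)$, so $Tx=\lambda x$. Applying $C$ and using $CT=T^*C$ together with the conjugate-linearity of $C$ yields $T^*(Cx)=C(Tx)=C(\lambda x)=\overline{\lambda}\,Cx$. Since $C$ is isometric it is injective, so $Cx\neq 0$ whenever $x\neq 0$; hence $C$ carries $\mathrm{Ker}(T-\lambda)$ into $\mathrm{Ker}(T^*-\overline{\lambda})$.

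For surjectivity I would note that the relation $T=CT^*C$ is symmetric in $T$ and $T^*$: conjugating both sides by $C$ gives $CTC=T^*$, so $T^*$ is itself $C$-symmetric. Applying the inclusion just proved to $T^*$, with eigenvalue $\overline{\lambda}$ (whose conjugate is $\lambda$), shows that $C$ maps $\mathrm{Ker}(T^*-\overline{\lambda})$ into $\mathrm{Ker}(T-\lambda)$. Because $C^2=I$, these two inclusions are mutually inverse, so $C\colon\mathrm{Ker}(T-\lambda)\to\mathrm{Ker}(T^*-\overline{\lambda})$ is a conjugate-linear bijection. The eigenvalue equivalence is then immediate: $\lambda$ is an eigenvalue of $T$ precisely when $\mathrm{Ker}(T-\lambda)\neq\{0\}$, which by the bijection occurs precisely when $\mathrm{Ker}(T^*-\overline{\lambda})\neq\{0\}$, i.e. when $\overline{\lambda}$ is an eigenvalue of $T^*$.

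There is no genuine obstacle here; the only points demanding a little care are tracking the conjugate-linearity when $C$ is pulled past the scalar $\lambda$, and invoking the involution property $C^2=I$ both to rearrange the defining identity and to obtain the reverse inclusion that gives surjectivity.
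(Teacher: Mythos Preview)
Your argument is correct and is essentially the same as the paper's, just unpacked: the paper compresses everything into the single identity $T-\lambda = C(T^*-\overline{\lambda})C$, from which the kernel correspondence follows immediately, while you derive the equivalent relation $CT=T^*C$ and trace through the eigenvector computation explicitly.
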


\begin{proof}
One only need to note that $T=CT^*C$ implies  $T-\lambda=C(T^*-\overline{\lambda})C$.
\end{proof}

The next lemma gives the eigenvectors of $C_\varphi$ on $H^2(D)$ when $\varphi$ is an elliptic automorphism of order $3$. The main calculation of this lemma is done in \cite{BN}, with the help of Theorem 9.2 in Cowen and MacCluer's book \cite{CC}.

\begin{lemma}\label{le}
Suppose $\varphi$ is an elliptic automorphism of order $3$ with fixed point $a\in D$. Let $\Lambda_m=Ker(C_\varphi-\varphi'(a)^m)$ and $\Lambda^*_m=Ker(C_\varphi^*-\overline{\varphi'(a)}^m)$ for $m=0,1,2$, then
$$\Lambda_m=\overline{ \mathrm{span}}\{\varphi_a^{3j+m}; j=0,1,2,...\}$$ and $$\Lambda_m^*=\overline{\mathrm{span}}\{e_{3j+m}-ae_{3j+m-1}; j=0,1,2,...\},$$
where $e_{-1}=0$ and $e_k=K_a\varphi_a^k$ for $k=0,1 ,2,...$
\end{lemma}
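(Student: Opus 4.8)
The plan is to compute the eigenvectors of $C_\varphi$ and $C_\varphi^*$ directly, so that the lemma reduces to a book-keeping identity. First I would recall from Cowen--MacCluer's Theorem 9.2 that if $\varphi$ is an elliptic automorphism fixing $a\in D$, then $C_\varphi$ is similar (via the composition operator $C_{\varphi_a}$, where $\varphi_a(z)=\frac{a-z}{1-\bar a z}$ exchanges $0$ and $a$) to $C_\psi$ where $\psi=\varphi_a\comp\varphi\comp\varphi_a$ is an elliptic automorphism fixing $0$; hence $\psi(z)=\varphi'(a)z$ is the rotation by $\varphi'(a)$, a primitive cube root of unity since $\varphi$ has order $3$. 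For a rotation $\psi(z)=\omega z$ the eigenvectors of $C_\psi$ on $H^2(D)$ are exactly the monomials $z^k$, with $C_\psi z^k=\omega^k z^k$; so the $\varphi'(a)^m$-eigenspace of $C_\psi$ is $\overline{\mathrm{span}}\{z^{3j+m}:j\ge 0\}$. Transporting back by $C_{\varphi_a}$, and using $C_{\varphi_a}^{-1}=C_{\varphi_a}$ (since $\varphi_a$ is an involution), the $\varphi'(a)^m$-eigenspace $\Lambda_m$ of $C_\varphi$ is $\overline{\mathrm{span}}\{\varphi_a^{3j+m}:j\ge 0\}$, which is the first assertion.

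For the second assertion I would take adjoints. Since $C_\varphi$ is invertible with $C_\varphi^{-1}=C_{\varphi_2}$ (where $\varphi_2=\varphi\comp\varphi$), one has $(C_\varphi)^*=(C_{\varphi}^{-1})^{*-1}$, but more usefully I would start from the similarity $C_\varphi=C_{\varphi_a}C_\psi C_{\varphi_a}$ and take adjoints to get $C_\varphi^*=C_{\varphi_a}^*C_\psi^*C_{\varphi_a}^*$. The adjoint of $C_{\varphi_a}$ is not itself a composition operator, but there is a classical formula: for an automorphism $\varphi_a$ one has $C_{\varphi_a}^*=M_{g}^{-1}C_{\varphi_a}M_{g}$ up to a weight, or more simply one can use that $C_{\varphi_a}^* K_w = K_{\varphi_a(w)}$ on reproducing kernels. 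The cleanest route is: $C_\psi^*$ has eigenvectors $K_0^{(k)}$ — in fact $C_\psi^* K_w = K_{\bar\omega w}$ shows that on the basis $\{z^k\}$, which is orthogonal in $H^2$, $C_\psi^*$ is diagonal with $C_\psi^* z^k = \bar\omega^{-k} z^k = \omega^k \cdot(\text{scalar})$; being careful, since $\{z^k\}$ is orthonormal, $C_\psi$ and $C_\psi^*$ are both diagonal and $C_\psi^* z^k=\overline{\varphi'(a)}^{k}z^k$, so the $\overline{\varphi'(a)}^m$-eigenspace of $C_\psi^*$ is again $\overline{\mathrm{span}}\{z^{3j+m}\}$. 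Then $\Lambda_m^*=\mathrm{Ker}(C_\varphi^*-\overline{\varphi'(a)}^m)=(C_{\varphi_a}^*)^{-1}\,\mathrm{Ker}(C_\psi^*-\overline{\varphi'(a)}^m)$, and I must identify $(C_{\varphi_a}^*)^{-1}z^k$ explicitly. Using $C_{\varphi_a}^*K_w=K_{\varphi_a(w)}$ and expanding $K_{\varphi_a(w)}$ in powers of $\bar w$, one reads off that $(C_{\varphi_a}^*)^{-1}$ sends $z^k$ to a combination of $e_k=K_a\varphi_a^k$; carrying out the (finite, at each order) computation gives $(C_{\varphi_a}^*)^{-1}z^k = c_k(e_k - a e_{k-1})$ for suitable nonzero constants $c_k$, with $e_{-1}=0$. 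This yields $\Lambda_m^*=\overline{\mathrm{span}}\{e_{3j+m}-ae_{3j+m-1}:j\ge 0\}$, the second assertion.

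The main obstacle is the explicit description of $(C_{\varphi_a}^*)^{-1}$ (equivalently $C_{\varphi_a}^*$) on the monomial basis: one must verify that, after transporting, the image of $z^k$ is precisely the rank-one-corrected kernel combination $e_k - a e_{k-1}$ and not something with a longer tail. I would handle this by working on reproducing kernels, where $C_{\varphi_a}^* K_w=K_{\varphi_a(w)}$, writing $K_{\varphi_a(w)}(z)=\frac{1}{1-\overline{\varphi_a(w)}z}$, substituting $\varphi_a(w)=\frac{a-w}{1-\bar a w}$, and expanding as a power series in $\bar w$; the coefficient of $\bar w^k$ is a function of $z$ that should work out (using the algebraic identity $1-\overline{\varphi_a(w)}z = \frac{(1-\bar a w)(1-\overline{\varphi_a(w)}z)}{1-\bar a w}$ and the geometric series) to a multiple of $K_a(z)\varphi_a(z)^k$ minus $a$ times the previous term. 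Once this identity is pinned down, everything else is linear algebra, and the fact that $\{1,\varphi_a,\varphi_a^2,\dots\}$ and $\{K_a,K_a\varphi_a,\dots\}$ span $H^2$ closes the argument. I would also double-check the normalization $e_0=K_a$ against the known value $C_{\varphi_a}^* 1 = K_a$ (since $1=K_0$ and $\varphi_a(0)=a$), which fixes the constants.
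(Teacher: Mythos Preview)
Your approach is essentially the same as the paper's: both conjugate $C_\varphi$ by $C_{\varphi_a}$ to reduce to the rotation $\psi(z)=\varphi'(a)z$, read off the eigenspaces $\overline{\mathrm{span}}\{z^{3j+m}\}$ for both $C_\psi$ and $C_\psi^*$, and transport back via $C_{\varphi_a}$ and $C_{\varphi_a}^*$ (using that $(C_{\varphi_a}^*)^{-1}=C_{\varphi_a}^*$). The only difference is that the paper cites \cite{BN} for the identity $C_{\varphi_a}^* z^k = e_k - a e_{k-1}$, whereas you propose to derive it directly from $C_{\varphi_a}^* K_w = K_{\varphi_a(w)}$ by expanding in powers of $\bar w$; that computation indeed gives the formula with constant $c_k=1$, so your sketch goes through.
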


\begin{proof}
Let $\tau=\varphi_a\comp\varphi\comp\varphi_a$. Then $\tau$ is a rotation of order $3$, that is, $\tau(z)=\varphi'(a)z$. So $C_\tau z^k=\varphi'(a)^kz^k$ for $k=0,1,2,...$ Moreover, since $C_\tau^*=C_{\tau^{-1}}$ where $\tau^{-1}(z)=\overline{\varphi'(a)}z$, we also have $C_\tau^* z^k=\overline{\varphi'(a)}^kz^k$. Thus
$$z^k\in Ker(C_\tau-\varphi'(a)^k)\cap Ker(C_\tau^*-\overline{\varphi'(a)}^k)$$
for $k=0,1,2,...$

Now by the definition of $\tau$ we have $C_\varphi C_{\varphi_a}=C_\tau C_{\varphi_a}$ and $C_\varphi^* C_{\varphi_a}^*=C_\tau^* C_{\varphi_a}^*$, so
$C_{\varphi_a}z^k\in Ker(C_\varphi-\varphi'(a)^k)$ and $C_{\varphi_a}^*z^k\in Ker(C_\varphi^*-\overline{\varphi'(a)}^k)$. Since $\varphi'(a)^3=1$ one can get
$$\overline{span}\{C_{\varphi_a}z^{3j+m}; j=0,1,2,...\}\subset\Lambda_m$$
and
$$\overline{span}\{C_{\varphi_a}^*z^{3j+m}; j=0,1,2,...\}\subset\Lambda_m^*.$$
On the other hand, both $C_{\varphi_a}$ and $C_{\varphi_a}^*$ are invertible on $H^2(D)$, then
$$\overline{span}\{C_{\varphi_a}z^k; k=0,1,2,...\}=\overline{span}\{C_{\varphi_a}^*z^k; k=0,1,2,...\}=H^2(D).$$
so we have
$$\Lambda_m=\overline{span}\{C_{\varphi_a}z^{3j+m}; j=0,1,2,...\}$$
and
$$\Lambda_m^*=\overline{span}\{C_{\varphi_a}^*z^{3j+m}; j=0,1,2,...\}.$$

Finally, $C_{\varphi_a}z^k=\varphi_a^k$ and the proof of Lemma 2.2 in \cite{BN} shows that $C_{\varphi_a}z^k=e_k-ae_{k-1}$, hence the proof is done.
\end{proof}

\begin{remark}
Note that $||e_j||^2=(1-|a|^2)^{-1}$ for $j=0,1,2,...$ and $\langle e_j,e_k\rangle=0$ whenever $j\ne k$.
\end{remark}

\begin{remark}\label{re2}
If $C_\varphi$ is $C$-symmetric with respect to a conjugation $C$, then $C\Lambda_m=\Lambda_m^*$ for $m=0,1,2$.
\end{remark}

\section{Proof of the main result}

In this section, we will focus on the proof of our main result, i.e., Theorem \ref{main}, which assert that no elliptic automorphism of order $3$ except for rotations can induce a complex symmetric composition operator on $H^2(D)$.

We would like to point out here that throughout the rest of this paper, each notation will always represent the same thing as it
did initially. For example, $\varphi$ is always a elliptic automorphism of order $3$ in what follows, $a$ is always the fixed point of $\varphi$ in $D\backslash\{0\}$, and $\rho$ always represents the same constant $-\frac{\overline{a}^2}{a}\cdot\frac{1-|a|^2}{1-|a|^4}$.

We will assume that $C_\varphi$ is $C$-symmetric with respect to some conjugation $C$, and finally we will show this assumption leads to a contradiction. Now we start by determining the image of a certain vector under the conjugation $C$. The notation $\{e_j\}_{j=0}^\infty$ in Lemma \ref{le} is still valid in this section.

\begin{claim}\label{1}
Let $\varphi$ be an elliptic automorphism of order $3$ with fixed point $a\in D\backslash\{0\}$. If $C_\varphi$ if $C$-symmetric on $H^2(D)$ with respect to a conjugation $C$, then we have
$$Ce_0=c_0\frac{1-\overline{a}^3\varphi_a^3}{1-\rho\varphi_a^3},$$
where $c_0$ is a constant and $\rho=-\frac{\overline{a}^2}{a}\cdot\frac{1-|a|^2}{1-|a|^4}$.
\end{claim}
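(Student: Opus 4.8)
The plan is to exploit Remark \ref{re2} together with the explicit descriptions of $\Lambda_0$ and $\Lambda_0^*$ from Lemma \ref{le}. Since $e_0 = K_a$ is the reproducing kernel at $a$ and lies in $\Lambda_0^* = \overline{\mathrm{span}}\{e_{3j}-ae_{3j-1}; j\geqslant 0\}$ (the $j=0$ term is $e_0$ itself), its image $Ce_0$ must lie in $\Lambda_0 = \overline{\mathrm{span}}\{\varphi_a^{3j}; j\geqslant 0\}$. So I would write $Ce_0 = f$ where $f$ is an analytic function of the variable $\varphi_a^3$, say $f = g\comp\varphi_a^3$ for some $g\in H^2(D)$ — more precisely $Ce_0$ belongs to the closed span of $\{1,\varphi_a^3,\varphi_a^6,\dots\}$. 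The goal is then to pin down $g$ up to a scalar.

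To do this I would extract linear relations on $Ce_0$ by pairing against well-chosen vectors and using the defining identity $C_\varphi = CC_\varphi^* C$, equivalently $\langle Cx, Cy\rangle = \langle y, x\rangle$ for all $x,y$. The natural test vectors are the basis elements $e_{3j}$ and $e_{3j}-ae_{3j-1}$ spanning $\Lambda_0^*$; applying $C$ sends these into $\Lambda_0$, and conjugate-linearity plus the isometry property converts inner products $\langle e_0, e_{3j}-ae_{3j-1}\rangle$ (which are $0$ for $j\geqslant 1$ and a known nonzero constant for $j=0$, using the orthogonality in the Remark after Lemma \ref{le}) into linear conditions on the Taylor-type coefficients of $g$ relative to the (non-orthogonal) system $\{C_{\varphi_a}z^{3j}\} = \{e_{3j}-ae_{3j-1}\}$. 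Carrying out this bookkeeping should force the coefficients of $g$ to satisfy a first-order recursion; solving it yields a geometric-type series, and summing that series produces the closed form $\dfrac{1-\overline{a}^3\varphi_a^3}{1-\rho\varphi_a^3}$, with the single free scalar $c_0$ coming from the undetermined normalization. The precise value $\rho=-\frac{\overline{a}^2}{a}\cdot\frac{1-|a|^2}{1-|a|^4}$ should drop out of the recursion's ratio once the norms $\|e_j\|^2=(1-|a|^2)^{-1}$ and the cross terms $\langle e_{3j}-ae_{3j-1}, e_{3k}-ae_{3k-1}\rangle$ are computed.

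The main obstacle, I expect, is handling the non-orthogonality of the system $\{e_k - ae_{k-1}\}$: although the $e_k$ themselves are orthogonal with known norms, the vectors $C_{\varphi_a}^*z^k = e_k - ae_{k-1}$ overlap in consecutive indices, so the Gram matrix is tridiagonal rather than diagonal and the ``coefficients'' of $Ce_0$ in this system are not simply inner products. One must either invert this tridiagonal Gram structure or, more cleanly, work on the $\Lambda_0$ side where $\{\varphi_a^{3j}\}$ is a genuine (Taylor) basis after the change of variable $w=\varphi_a$: then $Ce_0$ is literally an $H^2$ function $h(w)$ supported on powers $w^{3j}$, and the isometry conditions $\langle Ce_0, C(e_{3j}-ae_{3j-1})\rangle = \langle e_{3j}-ae_{3j-1}, e_0\rangle$ become linear equations in the Taylor coefficients of $h$ once one also knows $C(e_{3j}-ae_{3j-1})$, which by Remark \ref{re2} again lies in $\Lambda_0$ — so a small amount of additional input about how $C$ acts on the rest of the kernel-type basis of $\Lambda_0^*$ is needed. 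Managing this interplay — determining enough of $C$ on $\Lambda_0^*$ to solve for $Ce_0$ without circularity — is the delicate point; everything after setting up the recursion is a routine summation of a geometric series.
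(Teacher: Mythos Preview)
Your setup is right: $e_0\in\Lambda_0^*$ forces $Ce_0\in\Lambda_0$, so one can write $Ce_0=\sum_{j\ge0}c_j\varphi_a^{3j}$. But the way you propose to generate relations on the $c_j$ has a real gap, and you in fact identify it yourself without resolving it. Pairing $Ce_0$ against $C(e_{3j}-ae_{3j-1})$ requires knowing $C$ on all of those vectors simultaneously; knowing only that each image lies in $\Lambda_0$ is not enough to turn the isometry identity into a closed system for the $c_j$. That circularity is not ``a small amount of additional input'' --- it is the whole problem.

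The missing idea is to look sideways at a \emph{different} eigenspace. Since the $e_k$ are mutually orthogonal, $e_0$ is orthogonal to every $e_{3k+2}-ae_{3k+1}$, i.e.\ $e_0\perp\Lambda_2^*$. Because $C$ is an isometry with $C\Lambda_2=\Lambda_2^*$, this gives $Ce_0\perp\Lambda_2$, and the spanning vectors of $\Lambda_2$ are the explicitly known functions $\varphi_a^{3k+2}$ --- no further knowledge of $C$ required. So the usable equations are
\[
\Bigl\langle \sum_{j\ge0} c_j\varphi_a^{3j},\ \varphi_a^{3k+2}\Bigr\rangle=0,\qquad k=0,1,2,\dots,
\]
and since $\varphi_a$ is inner one has $\langle\varphi_a^m,\varphi_a^n\rangle=a^{m-n}$ for $m\ge n$ and $\overline{a}^{\,n-m}$ for $m<n$. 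Writing these out and subtracting consecutive equations collapses the infinite tail and yields the two-step recursion $c_1=\tilde\rho\,c_0$, $c_{j+1}=\rho\,c_j$ for $j\ge1$, with $\tilde\rho=-\frac{\overline{a}^2}{a}\cdot\frac{1-|a|^6}{1-|a|^4}$ and $\rho$ as stated. Summing the geometric series then gives the closed form. In short: test against $\Lambda_2$, not against $C$-images of $\Lambda_0^*$.
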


\begin{proof}
Let $h_0=Ce_0$. Since $e_0\in\Lambda_0^*$, we have $h_0\in\Lambda_0$. So we can suppose that
$$h_0=\sum_{j=0}^\infty c_j\varphi_a^{3j}.$$

It is obvious that $e_0$ is orthogonal to $\Lambda_2^*$. So by Remark \ref{re2} and the fact that $C$ is an isometry, $h_0$ is orthogonal to $\Lambda_2$, which means that $\langle h_0,\varphi_a^{3k+2}\rangle=0$ for $k=0,1,2,...$ So we have the following equations,
\begin{align}\label{3.1}
\sum_{j=0}^{k}c_j\overline{a}^{3k+2-3j}+\sum_{j=k+1}^{\infty}c_ja^{3j-3k-2}=0
\end{align}
for $k=0,1,2...$ Replace $k$ by $k+1$ in (\ref{3.1}) we get
$$\sum_{j=0}^{k+1}c_j\overline{a}^{3k+5-3j}+\sum_{j=k+2}^{\infty}c_ja^{3j-3k-5}=0,$$
hence
\begin{align}\label{3.2}
\sum_{j=0}^{k+1}c_j\overline{a}^{3k+5-3j}a^3+\sum_{j=k+2}^{\infty}c_ja^{3j-3k-2}=0.
\end{align}

Combining (\ref{3.1}) and (\ref{3.2}), we have
\begin{align}\label{3.3}
\sum_{j=0}^kc_j\overline{a}^{3k+2-3j}+c_{k+1}a\frac{1-|a|^4}{1-|a|^6}=0
\end{align}
for $k=0,1,2...$
Thus, we have $c_1=\tilde\rho c_0$ and $c_{j+1}=\rho c_j$ for $j=1,2,3...$, where
$\tilde\rho=-\frac{\overline{a}^2}{a}\cdot\frac{1-|a|^6}{1-|a|^4}$ and $\rho=-\frac{\overline{a}^2}{a}\cdot\frac{1-|a|^2}{1-|a|^4}$.
Therefore,
\begin{align*}
h_0&=c_0+c_1\sum_{j=1}^\infty\rho^{j-1}\varphi_a^{3j}
\\&=c_0+c_0\frac{\tilde\rho\varphi_a^3}{1-\rho\varphi_a^3}
\\&=c_0\frac{1-\overline{a}^3\varphi_a^3}{1-\rho\varphi_a^3}.
\end{align*}
\end{proof}

\begin{claim}
For the constant $c_0$ in Claim \ref{1}, we have
$$|c_0|=\frac{1}{1-|a|^4}.$$
\end{claim}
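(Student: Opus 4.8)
The plan is to compute $\|Ce_0\|^2$ in two different ways and equate them. Since $C$ is a conjugation, it is an isometry, so $\|Ce_0\| = \|e_0\| = (1-|a|^2)^{-1/2}$ by the first remark following Lemma \ref{le}. Thus it suffices to compute $\|h_0\|^2$ where $h_0 = Ce_0 = c_0\frac{1-\overline{a}^3\varphi_a^3}{1-\rho\varphi_a^3}$, and solve for $|c_0|^2$.

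The concrete approach is to use the series expansion $h_0 = \sum_{j=0}^\infty c_j\varphi_a^{3j}$ with $c_0$ arbitrary, $c_1 = \tilde\rho c_0$, and $c_{j+1} = \rho c_j$ for $j\geqslant 1$, all established in Claim \ref{1}. The functions $\{\varphi_a^{k}\}_{k\geqslant 0}$ are not orthogonal, so I must expand $\varphi_a^{3j}$ in terms of the orthogonal basis. Here I would use the relation $C_{\varphi_a}z^k = \varphi_a^k = e_k - ae_{k-1}$ from Lemma \ref{le} (with $e_{-1}=0$), so that $h_0 = \sum_j c_j(e_{3j} - ae_{3j-1})$, and then invoke $\|e_j\|^2 = (1-|a|^2)^{-1}$ and $\langle e_j, e_k\rangle = 0$ for $j\neq k$. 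Since the indices $3j$ and $3j-1$ are all distinct as $j$ ranges over $0,1,2,\dots$, the expansion of $h_0$ in the $e_k$ is already ``diagonal'' — the only subtlety is the missing $e_{-1}$ term at $j=0$. So $\|h_0\|^2 = (1-|a|^2)^{-1}\big(|c_0|^2 + \sum_{j=1}^\infty |c_j|^2(1+|a|^2)\big)$.

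It then remains to sum the geometric-type series. Using $c_j = \tilde\rho\rho^{j-1}c_0$ for $j\geqslant 1$ and $|\rho| = \frac{|a|^2(1-|a|^2)}{1-|a|^4} = \frac{|a|^2}{1+|a|^2} < 1$, the tail sums to $|c_0|^2|\tilde\rho|^2\frac{1}{1-|\rho|^2}$. Plugging in $|\tilde\rho|^2 = |a|^2\big(\frac{1-|a|^6}{1-|a|^4}\big)^2$ and simplifying, one gets $\|h_0\|^2 = (1-|a|^2)^{-1}|c_0|^2 \cdot F(|a|)$ for some rational function $F$; setting this equal to $(1-|a|^2)^{-1}$ forces $|c_0|^2 = 1/F(|a|)$, and the claim asserts this simplifies to $1/(1-|a|^4)^2$, i.e. $|c_0| = \frac{1}{1-|a|^4}$.

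The main obstacle is purely computational: carefully carrying out the algebraic simplification of the geometric series in terms of $t = |a|^2$, keeping track of the factor $(1+|a|^2)$ that appears for $j\geqslant 1$ but not for $j=0$, and confirming that all the terms collapse to the claimed clean expression $(1-|a|^4)^{-2}$. There is no conceptual difficulty — isometry of $C$ plus orthogonality of the $e_k$ does all the work — but one must be vigilant about the $j=0$ versus $j\geqslant 1$ asymmetry and about correctly evaluating $1-|\rho|^2 = 1 - \frac{|a|^4}{(1+|a|^2)^2} = \frac{(1+|a|^2)^2 - |a|^4}{(1+|a|^2)^2} = \frac{1+2|a|^2}{(1+|a|^2)^2}$, which enters the denominator.
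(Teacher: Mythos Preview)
There is a genuine error in your expansion step. The identity $\varphi_a^k = e_k - ae_{k-1}$ that you invoke is \emph{false}. The line in the proof of Lemma~\ref{le} contains a typo: what is actually true (and what the lemma's statement requires) is $C_{\varphi_a}^{*}z^k = e_k - ae_{k-1}$, whereas $\varphi_a^k = C_{\varphi_a}z^k$ is a different function. A one-line sanity check: for $k=0$ your identity would give $1=e_0=K_a$, which fails unless $a=0$; and for $k\ge 1$ it would give $\|\varphi_a^k\|^2=(1+|a|^2)/(1-|a|^2)$, contradicting $\|\varphi_a^k\|=1$ since $\varphi_a$ is inner. Consequently your formula
\[
\|h_0\|^2=(1-|a|^2)^{-1}\Bigl(|c_0|^2+(1+|a|^2)\sum_{j\ge 1}|c_j|^2\Bigr)
\]
is not valid, and carrying the arithmetic through does \emph{not} yield $|c_0|=(1-|a|^4)^{-1}$ (try $|a|^2=\tfrac12$).

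The functions $\{\varphi_a^k\}$ are not orthogonal (in fact $\langle\varphi_a^j,\varphi_a^k\rangle=a^{j-k}$ for $j\ge k$), so a direct series computation of $\|h_0\|^2$ becomes a double sum. The paper sidesteps this entirely: it observes that $h_0=\gamma_1 g+\gamma_2$ where
\[
g=\frac{\overline{\rho}-\varphi_a^3}{1-\rho\varphi_a^3}
\]
is an \emph{inner} function, so $\|g\|=1$ and $\langle g,1\rangle=g(0)$. Then $\|h_0\|^2=|\gamma_1|^2+|\gamma_2|^2+2\Re(\gamma_1\overline{\gamma_2}\,g(0))$, which collapses after a short computation to $|c_0|^2(1-|a|^2)(1+|a|^2)^2$. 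Equating this to $\|e_0\|^2=(1-|a|^2)^{-1}$ gives the claim. If you want to salvage your series approach, you would need the correct relation $(1-|a|^2)e_j=\varphi_a^j-\bar a\,\varphi_a^{j+1}$ and then handle the resulting cross terms; the inner-function trick is considerably cleaner.
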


\begin{proof}
Let
$$g=\frac{\overline{\rho}-\varphi_a^3}{1-\rho\varphi_a^3},$$
then $g$ is an inner function and $g(0)=-\frac{a^2}{\overline{a}}$.

A easy calculation shows that $h_0=\gamma_1g+\gamma_2$, where
$$\gamma_1=c_0\frac{\overline{a}^2}{a}(1+|a|^2) , \gamma_2=c_0(1+|a|^2).$$
So we have
\begin{align*}
||h_0||^2&=\langle\gamma_1g+\gamma_2,\gamma_1g+\gamma_2\rangle
\\&=|\gamma_1|^2+|\gamma_2|^2+2\Re\{\gamma_1\overline{\gamma_2}g(0)\}
\\&=|c_0|^2(1-|a|^2)(1+|a|^2)^2.
\end{align*}

Since $C$ is isometric, we can know that
$$||h_0||^2=||e_0||^2=\frac{1}{1-|a|^2},$$
thus $|c_0|=(1-|a|^4)^{-1}$.
\end{proof}

\begin{claim}
For the function $h_0=Ce_0$ in the proof of Claim \ref{1}, we have
$$\langle h_0,\varphi^{3k}\rangle=c_0(1-|a|^4)\rho^k$$\
for $k=0,1,2...$
\end{claim}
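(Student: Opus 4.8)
The plan is to read off $\langle h_0,\varphi_a^{3k}\rangle$ directly from the explicit power series for $h_0=Ce_0$ produced in the proof of Claim~\ref{1}, namely $h_0=\sum_{j\ge0}c_j\varphi_a^{3j}$ with $c_1=\tilde\rho c_0$ and $c_{j+1}=\rho c_j$ for $j\ge1$. Since $|\rho|=|a|/(1+|a|^2)<1$ the coefficients decay geometrically, and because $\|\varphi_a^m\|=1$ for every $m$ the series converges absolutely in $H^2(D)$, so inner products may be taken term by term. The one auxiliary fact needed is the inner product of two powers of $\varphi_a$: since $\varphi_a$ is an automorphism of $D$ it is inner, so $|\varphi_a|=1$ on $\partial D$; hence for integers $s\ge t\ge0$ one has $\varphi_a^{s}\,\overline{\varphi_a^{\,t}}=\varphi_a^{\,s-t}$ on the circle, and therefore
$$\langle\varphi_a^{\,s},\varphi_a^{\,t}\rangle=\int_0^{2\pi}\varphi_a^{\,s-t}(e^{i\theta})\,\frac{d\theta}{2\pi}=\varphi_a(0)^{\,s-t}=a^{\,s-t},$$
while for $t\ge s$ the value is $\overline a^{\,t-s}$ by conjugate symmetry.

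With this, I would split
$$\langle h_0,\varphi_a^{3k}\rangle=\sum_{j=0}^{k}c_j\,\overline a^{\,3(k-j)}+\sum_{j=k+1}^{\infty}c_j\,a^{\,3(j-k)}.$$
The finite sum is already in hand: multiplying equation~\eqref{3.3} through by $\overline a^{-2}$ gives $\sum_{j=0}^{k}c_j\,\overline a^{\,3(k-j)}=-c_{k+1}\,\dfrac{a}{\overline a^{2}}\cdot\dfrac{1-|a|^4}{1-|a|^6}$ for every $k\ge0$. The tail involves only $c_j$ with $j\ge k+1\ge1$, among which every consecutive ratio is $\rho$, so $c_{k+i}=\rho^{\,i-1}c_{k+1}$ and the tail is the geometric series $c_{k+1}\sum_{i\ge1}\rho^{\,i-1}a^{3i}=c_{k+1}\dfrac{a^{3}}{1-\rho a^{3}}=c_{k+1}\,a^{3}\,\dfrac{1-|a|^4}{1-|a|^6}$, where I use the one-line identity $1-\rho a^{3}=\dfrac{1-|a|^6}{1-|a|^4}$.

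Adding the two pieces and using $a^{3}-\dfrac{a}{\overline a^{2}}=\dfrac{a}{\overline a^{2}}(|a|^4-1)$ yields $\langle h_0,\varphi_a^{3k}\rangle=-c_{k+1}\,\dfrac{(1-|a|^4)^{2}}{1-|a|^6}\cdot\dfrac{a}{\overline a^{2}}$; finally substituting $c_{k+1}=\rho^{k}\tilde\rho c_0$ together with $\tilde\rho\cdot\dfrac{a}{\overline a^{2}}=-\dfrac{1-|a|^6}{1-|a|^4}$ collapses everything to $c_0(1-|a|^4)\rho^{k}$, as claimed. I do not expect a genuine obstacle here: it is a finite manipulation of constants already introduced, and the only two points needing care are that the identity $\langle\varphi_a^{s},\varphi_a^{t}\rangle=a^{s-t}$ truly uses that $\varphi_a$ is \emph{inner} (not merely bounded), and that the mild bookkeeping irregularity at $k=0$ — where $c_1=\tilde\rho c_0$ rather than $\rho c_0$ — is dealt with automatically by extracting the finite sum from \eqref{3.3} rather than manipulating the $c_j$ by hand.
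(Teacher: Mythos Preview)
Your proof is correct and follows essentially the same outline as the paper: expand $\langle h_0,\varphi_a^{3k}\rangle$ term by term using $\langle\varphi_a^{\,s},\varphi_a^{\,t}\rangle=a^{s-t}$ for $s\ge t$, split at $j=k$, evaluate the finite part via \eqref{3.3}, and finish by substituting $c_{k+1}=\tilde\rho\,\rho^{k}c_0$.

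The only difference is in how the infinite tail $\sum_{j>k}c_j\,a^{3(j-k)}$ is handled. You sum it directly as a geometric series (using $1-\rho a^{3}=(1-|a|^6)/(1-|a|^4)$) and then add the two pieces. The paper instead compares the full expansion \eqref{3.4} with the already-established orthogonality relation \eqref{3.1}: since \eqref{3.1} is precisely $\overline a^{2}\cdot(\text{finite part})+a^{-2}\cdot(\text{tail})=0$, the tail equals $-|a|^{4}$ times the finite part, and one gets immediately $\langle h_0,\varphi_a^{3k}\rangle=(1-|a|^4)\sum_{j=0}^{k}c_j\,\overline a^{\,3(k-j)}$. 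Both routes are short; the paper's is a touch slicker because it recycles \eqref{3.1} rather than summing the tail from scratch, but yours has the advantage of being self-contained.
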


\begin{proof}
\begin{align}\label{3.4}
\langle h_0,\varphi^{3k}\rangle=\sum_{j=0}^{k}c_j\overline{a}^{3k-3j}+\sum_{j=k+1}^{\infty}c_ja^{3j-3k}.
\end{align}
Comparing (\ref{3.4}) with (\ref{3.1}), we can get that
\begin{align}\label{3.5}
\langle h_0,\varphi^{3k}\rangle=(1-|a|^4)\sum_{j=0}^{k}c_j\overline{a}^{3k-3j}.
\end{align}
So (\ref{3.5}), along with (\ref{3.3}), shows that
\begin{align*}
\langle h_0,\varphi^{3k}\rangle&=(1-|a|^4)\frac{c_{k+1}}{\tilde\rho}
\\&=c_0(1-|a|^4)\rho^k.
\end{align*}
\end{proof}

\begin{claim}\label{4}
Under the assumption of Claim \ref{1}, we have
$$Ce_1=-c_0\frac{\overline{a}(1-|a|^6)}{a(1-|a|^4)}\cdot\frac{\varphi_a(1-\overline{a}^3\varphi_a^3)}{(1-\rho\varphi_a^3)^2}+\overline{a}h_0.$$
\end{claim}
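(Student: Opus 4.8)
The plan is to run the scheme of Claim \ref{1} one more time, starting now from $e_1$; the only genuinely new point is locating the right orthogonality relation. By Lemma \ref{le} we have $e_1-ae_0\in\Lambda_1^*$ while $ae_0\in\Lambda_0^*$. Since $C\Lambda_m^*=\Lambda_m$ (which follows from Remark \ref{re2} together with $C^2=I$), since $Ce_0=h_0$ by Claim \ref{1}, and since $C$ is conjugate-linear, writing $e_1=(e_1-ae_0)+ae_0$ gives
\[
Ce_1 \;=\; h_1+\overline{a}\,h_0,\qquad h_1:=C(e_1-ae_0)\in\Lambda_1 .
\]
By Lemma \ref{le} we expand $h_1=\sum_{j\ge 0}d_j\varphi_a^{3j+1}$, and the task is to identify the $d_j$.

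The key observation is that although $e_1-ae_0$ is \emph{not} orthogonal to $\Lambda_0^*$, it is orthogonal to all of $\Lambda_0^*$ apart from a multiple of $e_0$ — and $e_0$ is exactly the generator of $\Lambda_0^*$ whose image under $C$ is already known. Indeed, from $\langle e_i,e_j\rangle=0$ for $i\ne j$ and $\|e_j\|^2=(1-|a|^2)^{-1}$ one checks immediately that $\langle e_1-ae_0,\,e_{3j}-ae_{3j-1}\rangle=0$ for every $j\ge 1$, while $\langle e_1-ae_0,e_0\rangle=-a(1-|a|^2)^{-1}$. As $\{e_0\}\cup\{e_{3j}-ae_{3j-1}:j\ge1\}$ is a mutually orthogonal set spanning $\Lambda_0^*$ (Lemma \ref{le}), the orthogonal projection of $e_1-ae_0$ onto $\Lambda_0^*$ is $-ae_0$. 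Since $C$ is an isometric conjugation with $C\Lambda_0^*=\Lambda_0$, it carries this decomposition onto the corresponding one for $\Lambda_0$, so the orthogonal projection of $h_1$ onto $\Lambda_0$ equals $C(-ae_0)=-\overline{a}\,h_0$. Because every $\varphi_a^{3k}$ lies in $\Lambda_0$, the value of $\langle h_0,\varphi_a^{3k}\rangle$ computed in the preceding claim then yields
\[
\langle h_1,\varphi_a^{3k}\rangle \;=\; \langle -\overline{a}\,h_0,\varphi_a^{3k}\rangle \;=\; -\,\overline{a}\,c_0(1-|a|^4)\rho^{k},\qquad k=0,1,2,\dots .
\]

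Next I would expand $\langle h_1,\varphi_a^{3k}\rangle=\sum_{j=0}^{k-1}d_j\,\overline{a}^{\,3k-3j-1}+\sum_{j\ge k}d_j\,a^{\,3j+1-3k}$, equate it with the formula just obtained, replace $k$ by $k+1$, and combine the two identities exactly as in the passage from (\ref{3.1}) to (\ref{3.3}); the infinite tails cancel and one is left with a recursion that determines every $d_j$, including $d_0$ (here the right-hand side is a known nonzero quantity, rather than $0$ as in Claim \ref{1}, so no appeal to the norm of $C$ is needed). Summing the resulting series gives the closed form $h_1=\dfrac{\tilde\rho}{\overline{a}}\cdot\dfrac{\varphi_a\,h_0}{1-\rho\varphi_a^{3}}$; substituting $h_0=c_0\dfrac{1-\overline{a}^{3}\varphi_a^{3}}{1-\rho\varphi_a^{3}}$ and $\tilde\rho=-\dfrac{\overline{a}^{2}}{a}\cdot\dfrac{1-|a|^{6}}{1-|a|^{4}}$, and then adding $\overline{a}\,h_0$, produces the asserted formula for $Ce_1$.

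I expect the orthogonality step to be where the real thought lies: the analogue here of the elementary remark ``$e_0\perp\Lambda_2^*$'' used in Claim \ref{1} is the less evident fact that $e_1-ae_0$ meets $\Lambda_0^*$ only along $\mathbb{C}e_0$, which is precisely what makes the $\Lambda_0$-component of $h_1$ computable from the single known value $Ce_0=h_0$; note in particular that no information about $C$ on $\Lambda_2^*$ is required, since testing $h_1$ against the vectors $\varphi_a^{3k}\in\Lambda_0$ detects only its $\Lambda_0$-component. The remaining recursion and algebraic simplification are routine, paralleling the earlier claims of this section.
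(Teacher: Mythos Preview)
Your argument is correct and, once past the orthogonality step, coincides with the paper's: the equations you obtain for the $d_j$ are exactly the paper's equations for its $b_j$, and your closed form $h_1=\dfrac{\tilde\rho}{\overline{a}}\cdot\dfrac{\varphi_a h_0}{1-\rho\varphi_a^3}$ is a neat repackaging of the series the paper sums term by term.

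Where you diverge is in the setup, and there you have taken a detour. You chose to track $C(e_1-ae_0)$ and then had to work out its $\Lambda_0$-component via a projection argument, carefully checking that $e_1-ae_0$ meets $\Lambda_0^*$ only along $\mathbb{C}e_0$ and that $C$ carries orthogonal decompositions for $\Lambda_0^*$ to those for $\Lambda_0$. The paper instead sets $h_1=Ce_1$ and simply observes that $e_1\perp\Lambda_0^*$: the generators $e_0,\,e_{3j}-ae_{3j-1}$ $(j\ge1)$ of $\Lambda_0^*$ never involve $e_1$, so this is immediate. Then $\langle Ce_1,\varphi_a^{3k}\rangle=0$ directly, and expanding $Ce_1=\sum b_j\varphi_a^{3j+1}+\overline{a}h_0$ gives the recursion in one line. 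Your projection argument is valid, but it is reconstructing by hand precisely this one-line fact; the ``less evident'' observation you flag is actually the more evident $e_1\perp\Lambda_0^*$, seen from the other side.
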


\begin{proof}
Let $h_1=Ce_1$. Since $e_1-ae_0\in\Lambda_1^*$, we have $h_1-\overline{a}h_0\in\Lambda_1$. So we can assume that
$$h_1=\sum_{j=0}^\infty b_j\varphi_a^{3j+1}+\overline{a}h_0.$$

It is obvious that $e_1$ is orthogonal to $\Lambda_0^*$, so $h_1$ is orthogonal to $\Lambda_0$, which means that $\langle h_1,\varphi_a^{3k}\rangle=0$ for $k=0,1,2...$ So we have the following equations,
$$\sum_{j=0}^{\infty}b_ja^{3j+1}+c_0\overline{a}(1-|a|^4)=0,$$
and
$$\sum_{j=0}^{k-1}b_j\overline{a}^{3k-3j-1}+\sum_{j=k}^{\infty}b_ja^{3j+1-3k}+c_0\overline{a}(1-|a|^4)\rho^k=0$$
for $k=1,2,3...$

So
$$b_0a\frac{1-|a|^4}{1-|a|^6}+c_0\overline{a}=0,$$
and
$$\sum_{j=0}^{k-1}b_j\overline{a}^{3k-3j-1}+b_ka\frac{1-|a|^4}{1-|a|^6}+c_0\overline{a}\rho^k=0$$
for $k=1,2,3...$

Now let
$$\delta_j=-\frac{b_ja(1-|a|^4)}{c_0\overline{a}(1-|a|^6)},$$
then $\delta_0=1$, $\delta_1=\rho+\tilde\rho$, and
$$\delta_{k+1}=\rho\delta_k+\tilde\rho\rho^k$$
for $k=1,2,3...$ Hence we can get that
$$\delta_k=\rho^k+k\tilde\rho\rho^{k-1}$$
for $k=0,1,2...$

Thus we have
\begin{align*}
h_1-\overline{a}h_0&=\sum_{j=0}^\infty b_j\varphi_a^{3j+1}
\\&=-c_0\frac{\overline{a}(1-|a|^6)}{a(1-|a|^4)}\sum_{j=0}^\infty \delta_j\varphi_a^{3j+1}
\\&=-c_0\frac{\overline{a}(1-|a|^6)}{a(1-|a|^4)}\left(\sum_{j=0}^\infty\rho^j\varphi_a^{3j+1}+\sum_{j=0}^\infty j\tilde\rho\rho^{j-1}\varphi_a^{3j+1}\right)
\\&=-c_0\frac{\overline{a}(1-|a|^6)}{a(1-|a|^4)}\left(\frac{\varphi_a}{1-\rho\varphi_a^3}+\frac{\tilde\rho\varphi_a^4}{(1-\rho\varphi_a^3)^2}\right)
\\&=-c_0\frac{\overline{a}(1-|a|^6)}{a(1-|a|^4)}\cdot\frac{\varphi_a(1-\overline{a}^3\varphi_a^3)}{(1-\rho\varphi_a^3)^2}.
\end{align*}
\end{proof}

Now we can prove our final result as follows.

\begin{theorem}\label{main}
If $\varphi$ is an elliptic automorphism of order 3 with fixed point $a\in D\backslash \{0\}$, then $C_\varphi$ is not complex symmetric on $H^2(D)$.
\end{theorem}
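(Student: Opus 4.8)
The plan is to continue the computation from Claims~\ref{1}--\ref{4} and extract a genuine contradiction from the requirement that $C$ be an \emph{isometric} conjugation, not merely a conjugate-linear bijection respecting the spectral decomposition. The strategy of the proof is to compute two different inner products that $C$ must preserve and show that the resulting identities are incompatible for $a\in D\setminus\{0\}$. Concretely, since $C$ is isometric we have $\langle Ce_0,Ce_1\rangle=\langle e_1,e_0\rangle=0$ (recall from the Remark that $\langle e_j,e_k\rangle=0$ for $j\ne k$), and also $\|Ce_1\|^2=\|e_1\|^2=(1-|a|^2)^{-1}$. Claims~\ref{1} and \ref{4} give explicit closed forms for $h_0=Ce_0$ and $h_1=Ce_1$ as rational expressions in $\varphi_a^3$ (times $\varphi_a$ in the $h_1$ case), so both of these scalar equations become concrete identities in $|a|$.

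First I would set up the norm and inner-product computations the same way Claim~3.3 does: writing $w=\varphi_a^3$, which is an inner function with $w(0)=a^3$, one reduces integrals of rational functions of $w$ over the circle to residue/geometric-series computations, exactly as in the proof of Claim~3.3 where $\langle h_0,\varphi^{3k}\rangle$ was evaluated. The key intermediate quantities are the inner products $\langle \varphi_a^{3j+1},\varphi_a^{3k+1}\rangle$ and $\langle \varphi_a^{3j},\varphi_a^{3k}\rangle$, which are powers of $\overline a$ or $a$ depending on the sign of $j-k$ (this is precisely the structure already used to derive \eqref{3.1}). Using the formula for $h_1-\overline a h_0$ from Claim~\ref{4} together with $h_0$ from Claim~\ref{1}, expand $\|h_1\|^2 = \|h_1-\overline a h_0\|^2 + 2\Re\langle h_1-\overline a h_0,\overline a h_0\rangle + |a|^2\|h_0\|^2$ and similarly expand $\langle h_0,h_1\rangle$. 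Every term is an explicitly summable series in $\rho$, $\tilde\rho$, $\overline a^3$ and $|a|^2$; after simplification one gets polynomial (or rational) equations in the single real variable $t=|a|^2\in(0,1)$.

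The main obstacle — and the crux of the argument — is the algebra of showing that the system obtained this way has no solution with $0<|a|<1$. One expects that the isometry constraint on $\|h_0\|$ (Claim~3.2) already fixes $|c_0|$, so the two further constraints $\langle h_0,h_1\rangle=0$ and $\|h_1\|^2=(1-|a|^2)^{-1}$ are one more than the number of free parameters, and generically overdetermined; the work is to verify that they are genuinely inconsistent rather than reducing to a tautology or to the excluded case $a=0$. I would first try the orthogonality relation $\langle h_0,h_1\rangle=0$ alone, since $h_1-\overline a h_0$ lies in $\Lambda_1$ while $h_0\in\Lambda_0$, so $\langle h_0,h_1\rangle = \overline a\|h_0\|^2 + \langle h_0, h_1-\overline a h_0\rangle$, and the cross term involves $\langle \varphi_a^{3j}, \varphi_a^{3k+1}\rangle$; if this single equation forces $\overline a\|h_0\|^2$ to vanish it would immediately give $a=0$, contradicting $a\in D\setminus\{0\}$. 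If that clean route does not close, the fallback is to bring in the $\|h_1\|^2$ identity and eliminate $|c_0|^2$ between it and Claim~3.2, leaving a polynomial identity in $t$ alone whose only roots are $t=0$ or $t\ge 1$. Either way the contradiction is obtained, proving $C_\varphi$ is not complex symmetric, and combining with Propositions~\ref{propa} and \ref{propb} yields the Main Result as Corollary~\ref{last}.
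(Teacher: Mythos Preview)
Your fallback route is exactly the paper's argument: compare the norm $\|h_1-\overline a h_0\|$ computed from the explicit formula of Claim~\ref{4} against the value forced by the isometry of $C$, and derive a polynomial identity in $t=|a|^2$ with no root in $(0,1)$. Since $\langle h_0,h_1\rangle=0$ and $\|h_0\|=\|e_0\|$ are already known, your constraint $\|h_1\|^2=\|e_1\|^2$ is literally equivalent to the paper's $\|h_1-\overline a h_0\|^2=(1+|a|^2)/(1-|a|^2)$.

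Your first attempt, however, is a dead end and you should not spend time on it: the orthogonality $\langle h_0,h_1\rangle=0$ is \emph{already} built into the derivation of Claim~\ref{4}. That claim uses precisely the fact that $e_1\perp\Lambda_0^*$, hence $h_1\perp\Lambda_0$, and since $h_0\in\Lambda_0$ this gives $\langle h_1,h_0\rangle=0$ automatically. So that relation carries no new information and cannot produce a contradiction.

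On the computational side, the paper avoids the direct double-series summation you propose by a trick worth noting. Setting $f=\frac{1-|a|^6}{1-|a|^4}\cdot\frac{1-\overline a^{\,3}\varphi_a^3}{(1-\rho\varphi_a^3)^2}$ (so that $\|f\|=(1-|a|^4)\,\|h_1-\overline a h_0\|$), one writes $f=\beta_1 g^2+\beta_2 g+\beta_3$ with $g=\frac{\overline\rho-\varphi_a^3}{1-\rho\varphi_a^3}$ inner; then $\|f\|^2$ reduces to $|\beta_1|^2+|\beta_2|^2+|\beta_3|^2+2\Re(\beta_1\overline{\beta_2}g(0)+\beta_2\overline{\beta_3}g(0)+\beta_1\overline{\beta_3}g(0)^2)$, a finite expression. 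The resulting equation is $1+2|a|^2-2|a|^4-|a|^6=1-|a|^4$, i.e.\ $|a|^2+|a|^4=2$, impossible for $a\in D$. Your series approach would reach the same identity but with considerably more bookkeeping; the inner-function decomposition is what makes the algebra manageable.
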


\begin{proof}
Suppose that $C_\varphi$ is $C$-symmetric with respect to conjugation $C$. Then Claim \ref{1} to \ref{4} hold.

Let
$$f=\frac{1-|a|^6}{1-|a|^4}\cdot\frac{1-\overline{a}^3\varphi_a^3}{(1-\rho\varphi_a^3)^2},$$
Then $||f||=|c_0|^{-1}\cdot||h_1-\overline{a}h_0||=(1-|a|^4)||h_1-\overline{a}h_0||$.

A tedious calculation shows that $f=\beta_1g^2+\beta_2g+\beta_3$, where $g=\frac{\overline{\rho}-\varphi_a^3}{1-\rho\varphi_a^3}$ and
\begin{align*}
\beta_1&=\frac{(1-|a|^4)(1+|a|^2)^2}{1-|a|^6}(\rho^2-\overline{a}^3\rho)=\frac{\overline{a}^4}{a^2}(1+|a|^2);
\\\beta_2&=\frac{(1-|a|^4)(1+|a|^2)^2}{1-|a|^6}(-2\rho+\overline{a}^3+\overline{a}^3|\rho|^2)=\frac{\overline{a}^2}{a}(1+|a|^2)(2+|a|^2);
\\\beta_3&=\frac{(1-|a|^4)(1+|a|^2)^2}{1-|a|^6}(1-\overline{a}^3\overline{\rho})=(1+|a|^2)^2.
\end{align*}

So
\begin{align*}
||f||^2&=\langle\beta_1g^2+\beta_2g+\beta_3,\beta_1g^2+\beta_2g+\beta_3\rangle
\\&=|\beta_1|^2+|\beta_2|^2+|\beta_3|^2+2\Re\left(\beta_1\overline{\beta_2}g(0)+\beta_2\overline{\beta_3}g(0)+\beta_1\overline{\beta_3}g(0)^2\right)
\\&=(1+2|a|^2-2|a|^4-|a|^6)(1+|a|^2)^2.
\end{align*}

However, since $C$ is isometric,
\begin{align*}
||f||^2&=(1-|a|^4)^2||h_1-\overline{a}h_0||^2
\\&=(1-|a|^4)^2||e_1-\overline{a}e_0||^2
\\&=(1-|a|^4)^2(1+|a|^2)(1-|a|^2)
\\&=(1-|a|^4)(1+|a|^2)^2.
\end{align*}
So we have
\begin{align*}
(1+2|a|^2-2|a|^4-|a|^6)(1+|a|^2)^2&=(1-|a|^4)(1+|a|^2)^2
\\2|a|^2-|a|^4-|a|^6&=0
\\|a|^2+|a|^4&=2,
\end{align*}
which is impossible since $a\in D$.
\end{proof}

At last we can get our main result as a corollary. It gives a complete description of the automorphisms which can induce complex symmetric operators on $H^2(D)$.

\begin{corollary}\label{last}
Suppose $\varphi$ is an automorphism of $D$. Then $C_\varphi$ is complex symmetric on $H^2(D)$ if and only if $\varphi$ is either a rotation or an elliptic automorphism of order two.
\end{corollary}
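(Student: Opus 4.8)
The plan is to obtain Corollary~\ref{last} as a bookkeeping consequence of Theorem~\ref{main}, Propositions~\ref{propa} and~\ref{propb}, and the elementary examples recalled in the introduction, organized as a case analysis on the type and the order of the automorphism $\varphi$.

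First I would dispose of the ``if'' direction. If $\varphi$ is a rotation $\varphi(z)=e^{i\theta}z$, then on the orthonormal basis $\{z^n\}$ the operator $C_\varphi$ acts as a diagonal unitary, so it is normal and hence complex symmetric; equivalently this is the case $\varphi(z)=sz$ with $|s|\leqslant1$ already mentioned in the introduction. If $\varphi$ is an elliptic automorphism of order two, then $\varphi\circ\varphi=\mathrm{id}_D$, so $C_\varphi^2=I$ and $C_\varphi$ is complex symmetric by the $q=2$ case of Proposition~\ref{propb}.

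For the ``only if'' direction I would assume $C_\varphi$ is complex symmetric on $H^2(D)$ and run through the classification of automorphisms. By Proposition~\ref{propa}, $\varphi$ has a fixed point in $D$, so $\varphi$ is either the identity or an elliptic automorphism; let $q$ be its order. If $q=1$, then $\varphi=\mathrm{id}_D$ is a rotation. If $q=2$, we already have the conclusion. If $4\leqslant q\leqslant\infty$, Proposition~\ref{propb} forces $\varphi$ to be a rotation. Finally, if $q=3$, then either the fixed point of $\varphi$ is $0$ and $\varphi$ is a rotation, or the fixed point lies in $D\setminus\{0\}$, in which case Theorem~\ref{main} says $C_\varphi$ is not complex symmetric, contradicting the hypothesis. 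In every case $\varphi$ is a rotation or an elliptic automorphism of order two, which together with the first part gives the equivalence.

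I expect no genuine obstacle at the level of the corollary itself: all of the mathematical difficulty has already been absorbed into Theorem~\ref{main}, whose proof is the delicate step --- it pins down $Ce_0$ and $Ce_1$ from the eigenspace identities $C\Lambda_m^*=\Lambda_m$ together with the orthogonality of the $e_j$, and then computes $\|C(e_1-ae_0)\|$ in two different ways to force the impossible relation $|a|^2+|a|^4=2$. The one point of care in assembling the corollary is to remember that the degenerate ``orders'' must be folded into the rotation case (the identity when $q=1$, and the order-three rotation when $q=3$ with fixed point $0$), since these are not covered by Theorem~\ref{main} as stated.
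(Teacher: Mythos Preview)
Your proposal is correct and is exactly the case analysis the paper intends: the corollary is stated there without proof, as an immediate consequence of Theorem~\ref{main} together with Propositions~\ref{propa} and~\ref{propb} and the introductory remarks about rotations and order-two elliptic automorphisms. Your care in folding the degenerate cases ($q=1$, and $q=3$ with fixed point $0$) into the rotation case is appropriate and matches the paper's conventions.
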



\begin{thebibliography}{99}

\bibitem{CC} C. C. Cowen, B. D. MacCluer, Composition Operators on Spaces of Analytic Functions, CRC Press, Boca Raton, 1995.

\bibitem{BN} P. S. Bourdon, S. Waleed Noor, Complex symmetry of invertible composition operators, J. Math. Anal. Appl. 429 (2015) 105-110.

\bibitem{GH} S. R. Garcia, C. Hammond, Which weighted composition operators are complex symmetric?, Oper. Theory Adv. Appl. 236 (2014) 171-179.

\bibitem{CS1} S. R. Garcia, M. Putinar, Complex symmetric operators and applications, Trans. Amer. Math. Soc. 358 (3) (2006) 1285-1315.

\bibitem{Cs2} S. R. Garcia, M. Putinar, Complex symmetric operators and applications. II, Trans. Amer. Math. Soc. 359 (8) (2007) 3913-3931.

\bibitem{Cs3} S. R. Garcia, W. R. Wogen, Some new classes of complex symmetric operators, Trans. Amer. Math. Soc. 362 (11) (2010) 6065-6077.

\bibitem{Invo} S. Waleed Noor, Complex symmetry of composition operators induced by involutive ball automorphisms, Proc. Amer. Math. Soc. 142 (2014) 3103-3107.

\end{thebibliography}
\end{document}